\theoremstyle{thmstyleone}%
\newtheorem{theorem}{Theorem}
\newtheorem{lemma}[theorem]{Lemma}%
\newtheorem{proposition}[theorem]{Proposition}%
\newtheorem{claim}[theorem]{Claim}%
\theoremstyle{thmstyletwo}%
\theoremstyle{thmstylethree}%
\newtheorem{definition}{Definition}%
\def\bb{{\mathcal B}} 
\def\rr{{\mathcal R}}
\def\GG{{\mathcal G}}
\def\ii{{\mathcal I}}
\newcommand{\ka}{\kappa}
\DeclarePairedDelimiter\ceil{\lceil}{\rceil}
\DeclarePairedDelimiter\floor{\lfloor}{\rfloor}
\begin{document}

\title[Independence numbers of the $2$-token graphs...]{Independence numbers of the $2$-token graphs of some join graphs}


\author*[1]{\fnm{Luis Manuel} \sur{Rivera}}\email{luismanuel.rivera@gmail.com}

\author[2]{\fnm{Gerardo} \sur{Vazquez Briones}}\email{vabge0310@hotmail.com}

\affil[1,2]{\orgdiv{Unidad Acad\'emica de Matem\'aticas}, \orgname{Universidad Aut\'onoma de Zacatecas}, \orgaddress{\street{Calzada
Solidaridad entronque Paseo a la Bufa}, \city{Zacatecas}, \postcode{98000}, \state{Zacatecas}, \country{Mexico}}}


\abstract{The \textit{$2$-token graph} $F_2(G)$ of a graph $G$ is the graph whose set of vertices consists of all the $2$-subsets of $V(G)$, where two vertices are adjacent if and only if their symmetric difference is an edge in $G$. Let $G$ be the join graph of $E_n$ and $H$, where $H$ is any graph. In this paper, we give a method to construct an independent set $\ii'$ of $F_2(G)$ from an independent set $\ii$ of $F_2(G)$ such that $|\ii'| \geq |\ii|$. As an application, we obtain the independence number of the $2$-token graphs of fan graphs $F_{n, m}$, wheel graphs $W_{n, m}$ and $E_n+K_n$. 
}

\keywords{Token graphs, Independence numbers, Join graphs}


\pacs[MSC Classification]{05C69, 05C76}

\maketitle

\section{Introduction}
The \textit{$k$-token graph} $F_k(G)$ of a graph $G$ is the graph whose set of vertices consists of all the $k$-subsets of $V(G)$, where two vertices are adjacent if and only if their symmetric difference is an edge in $G$. It seems that this class of graphs was defined by Johns~\cite{johns}, under the name of $n$-subgraph graph of a graph. In 1991 Alavi et al.,~\cite{alavi1}, independently, reintroduced the $2$-token graphs calling them \textit{double vertex graphs} and together with his co-authors studied several of their properties (see e.g. \cite{alavi2, alavi3}). For several years, the study of this class of graphs apparently stopped. However, in 2002, Rudolph~\cite{rudolph}, redefined the token graphs with the name of \textit{symmetric powers of graphs} \cite{aude}, to study the graph isomorphism problem and as a tool to model a phenomenon in quantum mechanics (see e.g. \cite{aude, barghi, fisch} for other related works) Later, in 2012 Fabila-Monroy et al.,~\cite{FFHH} reintroduced the concept of $k$-token graphs as ``a model in which $k$ indistinguishable tokens move from vertex to vertex along the edges of a graph". From this article on, the study of the properties of token graphs has increased significantly, see e.g. ~\cite{adame2021hamiltonicity, fabila2022connectivity, gomez, leanos2021edge,  reyes2023spectra, zhang2023automorphisms}.

In this paper, we are interested in the independence number of token graphs. Alba et al.,~\cite{alba} began the study of the independence number of $k$-token graphs, in particular, they obtained the exact independence number of the $2$-token graph of path graphs, cycle graphs, complete bipartite graphs among others. Deepalakshmi et al.,~\cite{deepa} presented a sharp lower bound on $\alpha(F_k(G))$. Abdelmalek et al.,~\cite{abdelmalek}, studied the well-covered token graphs (when all of the maximal independent sets of $F_k(G)$ have the same cardinality). In particular, they showed a characterization of the well-covered $F_k(G)$ when $G$ is a connected bipartite graph. Finally, J\'imenez-Sep\'ulveda and Rivera~\cite{jimenez} obtained the independence number of the $2$-token graphs of the fan graph $F_{1, m}$ and of the wheel graph $W_{1, m}$.  

\subsection{Main results}
In this paper, we obtain the independence number of the $2$-token graphs of $F_{n, m}$ (Theorem~\ref{tabanico}) and $W_{n, m}$ (Theorem~\ref{trueda}), for $n \geq 1$, which generalizes the results obtained by the first author and J\'imenez-Sep\'ulveda~\cite{jimenez}. These graphs are a particular case of a graph product. The {\it join graph} $G_1+G_2$ of two disjoint graphs $G_1$ and $G_2$ is defined as the graph whose vertex set is $V(G_1)\cup V(G_2)$ and its edge set is $E(G)=E(G_1) \cup E(G_2) \cup \{uv \colon u \in V(G_1) \text{  and } v \in V(G_2)\}$. Some special cases are the bipartite complete graph $K_{n,m}=E_n+E_m$, the fan graphs $F_{n, m}=E_n+P_m$ and the wheel graphs $W_{n,m}=E_n+C_m$, where $E_n=\overline{K_n}$.

Let $H$ be any graph and $G=E_n+H$. In this paper, we show a method to construct an independent set $\ii'$ of $F_2(G)$ from a given independent set $\ii$ of $F_2(G)$ such that $|\ii'|\geq |\ii|$ (Lemma~\ref{conjasociado}). Using this result, we obtain, in particular, the independence number of the $2$-token graph of $F_{n, m}$, $W_{n, m}$ and $E_n+K_n$ (Theorem~\ref{teokmen}), but we believe that this technique can be applied for other graphs of the form $E_n+H$. Another result, that is used in the proof of Theorems~\ref{tabanico} and~\ref{trueda}, is the independence number of the $2$-token graph of $P_m-A$, where $A \subset V(P_m)$ (Theorem~\ref{propcaminomenos}). 

The outline of the paper is as follows. In Section~\ref{sec2} we present some preliminary results and the proof of Theorem~\ref{propcaminomenos}. In Sections~\ref{sec3},~\ref{sec4},~\ref{sec5}, we present the proofs of Theorems~\ref{tabanico},~\ref{trueda}, and~\ref{teokmen}, respectively. Finally, in Section~\ref{sec6} we use Lemma~\ref{conjasociado} to obtain $\alpha(F_2(K_{n,m}))$ which provides a simpler proof than the one presented by de Alba et. al. \cite{alba}.

\section{Preliminary results}\label{sec2}
First, we give some definitions and notation which will be needed throughout the paper. An {\it odd component} $G'$ of a graph $G$ is a component of $G$ such that $|G'|$ is odd. Let $P_n$ and $C_n$ denote the path graph and cycle graph on $n$ vertices, respectively. 
Let $C_2^X$ denote the collection of all $2$-subsets of the set $X$. As usual $\binom{j}{k}=0$ if $j<k$. Let $E_n$ denote the complement graph of $K_n$. In the following theorem, we show some known independence numbers of the $2$-token graphs of some graphs that were proved in~\cite{alba} except for the case (\textit{vi}) which is a well-known~\cite{FFHH,johnson}.

\begin{theorem}\label{indepenk}
\begin{enumerate} Let $m$ be an integer
\item[i)] $\alpha(F_2(P_m))=\lfloor m^2/4\rfloor$, $m \geq 2$;
\item[ii)]  $\alpha(F_2(C_m))= \left\lfloor m \left\lfloor m/2 \right\rfloor/2\right\rfloor$, $m \geq 3$;
\item[iii)] $\alpha(F_2(E_m))=\binom{m}{2}$, $m \geq 2$;
\item[iv)] $ \alpha(F_2(K_{m,1}))=m$, for $m \in \{1,2\}$;
\item[v)] $ \alpha(F_2(K_{m,1}))=\binom{m}{2}$, for $m \geq 3$;
\item[vi)] $\alpha(F_2(K_m))=\left\lfloor m/2 \right\rfloor$, $m \geq 2$.
\end{enumerate}
\end{theorem}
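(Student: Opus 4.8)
The plan is to treat the six identities in increasing order of difficulty, the only genuine obstacle being case (ii) with $m$ odd. Cases (iii) and (vi) are purely structural: since $E_m$ is edgeless, no symmetric difference of two $2$-sets is an edge, so $F_2(E_m)$ is edgeless and $\alpha(F_2(E_m))=\binom{m}{2}$; and two vertices of $F_2(K_m)$ are adjacent exactly when the two $2$-sets meet in a single vertex, so an independent set of $F_2(K_m)$ is precisely a matching of $K_m$ and has at most $\lfloor m/2\rfloor$ edges. For (iv) and (v) I would use that $F_2(K_{m,1})$ is bipartite with parts $A=\{\{c,\ell_i\}\}$ (apex $c$, leaves $\ell_1,\dots,\ell_m$) and $B=\{\{\ell_i,\ell_j\}\}$, its only edges joining $\{c,\ell_i\}$ to $\{\ell_i,\ell_j\}$; an independent set meeting $A$ in the pairs $\{c,\ell_i\}$ with $i$ in a $k$-set $J$ then avoids, inside $B$, every pair touching $\{\ell_i:i\in J\}$, so it has size at most $k+\binom{m-k}{2}$. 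A one-line optimisation shows this maximum is $\binom{m}{2}$ (attained by $B$) for $m\ge 3$ and $m$ for $m\le 2$, the two small cases being confirmed by inspecting $F_2(K_{2,1})$ (a three-vertex path) and $F_2(K_{1,1})$ (a single vertex) directly.

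For (i), the lower bound is the proper $2$-colouring of $F_2(P_m)$ that colours $\{a,b\}$ by the parity of $a+b$: an edge of $F_2(P_m)$ slides one token to a neighbouring vertex of $P_m$, changing $a+b$ by $\pm1$ and so flipping the colour, whence each colour class is independent and the larger one has $\lfloor m^2/4\rfloor$ vertices. For the upper bound I would induct on $m$: the pairs avoiding vertex $m$ induce a copy of $F_2(P_{m-1})$, and the $m-1$ pairs containing $m$ induce a supergraph of the path with vertex sequence $\{1,m\},\{2,m\},\dots,\{m-1,m\}$, so $\alpha(F_2(P_m))\le\alpha(F_2(P_{m-1}))+\lceil(m-1)/2\rceil$; since $\lfloor(m-1)^2/4\rfloor+\lceil(m-1)/2\rceil=\lfloor m^2/4\rfloor$, the induction (base $F_2(P_2)=K_1$) closes. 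Case (ii) for even $m$ then comes for free: the same parity colouring, now read off the bipartition of $C_m$, gives an independent set of size $m^2/4=\lfloor m\lfloor m/2\rfloor/2\rfloor$, while $P_m\subseteq C_m$ forces $\alpha(F_2(C_m))\le\alpha(F_2(P_m))=m^2/4$.

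The real obstacle is (ii) with $m$ odd. Here $F_2(C_m)$ is not bipartite, the parity colouring is unavailable, and the crude vertex-deletion bound $\alpha(F_2(C_m))\le\alpha(F_2(P_{m-1}))+\lceil(m-1)/2\rceil$ strictly overshoots the target $\lfloor m\lfloor m/2\rfloor/2\rfloor$. I would therefore build the extremal family by hand---for small odd $m$ the ``consecutive'' pairs $\{\{i,i+1\}:i\in\mathbb Z_m\}$ already form an independent set of size $m$, but for larger $m$ one must graft on carefully chosen longer chords---and then certify optimality through a bespoke clique cover (or a König-type matching) of $F_2(C_m)$. This is the step I expect to require real work; once it and the structural remarks above are in hand the six identities follow, which is essentially what is carried out in~\cite{alba}.
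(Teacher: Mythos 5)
The paper does not prove this theorem at all: it is explicitly a compilation of known results, with (i)--(v) quoted from de Alba et al.~\cite{alba} and (vi) from~\cite{FFHH,johnson}. So there is no in-paper argument to compare against; what matters is whether your proposal stands on its own. Most of it does. Your structural observations for (iii) and (vi) are correct ($F_2(E_m)$ is edgeless; independent sets of $F_2(K_m)$ are exactly matchings of $K_m$), the bipartite analysis of $F_2(K_{m,1})$ with the bound $k+\binom{m-k}{2}$ correctly yields (iv) and (v), the parity colouring plus the vertex-partition induction $\alpha(F_2(P_m))\le\alpha(F_2(P_{m-1}))+\lceil(m-1)/2\rceil$ gives (i), and the even case of (ii) follows as you say from $F_2(P_m)$ being a spanning subgraph of $F_2(C_m)$.

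The genuine gap is exactly where you flag it: part (ii) for odd $m$. You neither exhibit an independent set of size $\lfloor m\lfloor m/2\rfloor/2\rfloor=\lfloor m(m-1)/4\rfloor$ nor prove the matching upper bound; the consecutive-pairs family you mention has only $m$ elements, which already falls short at $m=7$ ($7$ versus $10$), and the ``grafting on longer chords'' and ``bespoke clique cover'' are placeholders, not arguments. The difficulty is real: for odd $m$ the natural invariant (the cyclic distance between the two tokens) ceases to separate the top distance class, which acquires internal edges, so neither the parity colouring nor the distance-class selection closes the case. As a self-contained proof of the theorem the proposal is therefore incomplete on this one item; since the paper itself only cites~\cite{alba} for it, deferring to that reference (as you do in your last sentence) is acceptable in context, but then the honest statement is that (ii) for odd $m$ is being quoted, not proved.
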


Let $G=G_1+G_2$. Let $\bb_i=C_2^{V(G_i)}$, for $i=1, 2$. Let $\mathcal{B}=\bb_1 \cup \bb_2$ and $\rr=\{\{u, v\} \colon u \in V(G_1), v \in V(G_2)\}$. Note that $\{\bb_1, \bb_2, \rr\}$ is a partition of  $V(F_2(G))$ and that $F_2(G_i)$ is an induced subgraph of $F_2(G)$, where $V(F_2(G_i))=\bb_i$, for $i=1,2$. Therefore, we have the following lower bound for $\alpha(F_2(G))$.

\begin{proposition}\label{cotainfe}
Let $G=G_1+G_2$. Then $\alpha(F_2(G)) \geq \alpha(F_2(G_1))+ \alpha(F_2(G_2))$.
\end{proposition}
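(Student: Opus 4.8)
The plan is to exhibit an explicit independent set of $F_2(G)$ of the required size by juxtaposing maximum independent sets of the two induced copies $F_2(G_1)$ and $F_2(G_2)$. Concretely, let $\ii_1 \subseteq \bb_1$ be a maximum independent set of $F_2(G_1)$ and let $\ii_2 \subseteq \bb_2$ be a maximum independent set of $F_2(G_2)$, so that $|\ii_1| = \alpha(F_2(G_1))$ and $|\ii_2| = \alpha(F_2(G_2))$. Since $V(G_1)\cap V(G_2)=\emptyset$, no $2$-subset of $V(G_1)$ equals a $2$-subset of $V(G_2)$, hence $\bb_1 \cap \bb_2 = \emptyset$ and therefore $|\ii_1 \cup \ii_2| = \alpha(F_2(G_1)) + \alpha(F_2(G_2))$. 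It then remains only to check that $\ii_1 \cup \ii_2$ is an independent set of $F_2(G)$.

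First I would handle pairs of vertices lying in the same block. If $A, A' \in \ii_1$, then, because $F_2(G_1)$ is the subgraph of $F_2(G)$ induced by $\bb_1$, the non-adjacency of $A$ and $A'$ in $F_2(G_1)$ is exactly their non-adjacency in $F_2(G)$; the same argument, applied to the induced copy $F_2(G_2)$ on $\bb_2$, handles pairs in $\ii_2$. The only remaining case is a mixed pair $A \in \ii_1 \subseteq \bb_1$ and $B \in \ii_2 \subseteq \bb_2$. Here $A \subseteq V(G_1)$ and $B \subseteq V(G_2)$ are disjoint $2$-subsets, so their symmetric difference $A \triangle B = A \cup B$ has four elements and thus cannot be an edge of $G$; hence $A$ and $B$ are non-adjacent in $F_2(G)$. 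Combining the three cases shows $\ii_1 \cup \ii_2$ is independent, and the bound $\alpha(F_2(G)) \geq |\ii_1 \cup \ii_2| = \alpha(F_2(G_1)) + \alpha(F_2(G_2))$ follows.

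I do not expect any real obstacle here: the statement is a direct consequence of the observation, recorded just before the proposition, that $F_2(G_1)$ and $F_2(G_2)$ sit inside $F_2(G)$ as vertex-disjoint induced subgraphs, together with the absence of edges of $F_2(G)$ between $\bb_1$ and $\bb_2$. The single point one must not skip is that last absence-of-edges claim, which, as above, reduces to the elementary cardinality count $|A \triangle B| = 4$ for disjoint $2$-sets $A$ and $B$.
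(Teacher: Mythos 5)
Your proof is correct and follows exactly the same route as the paper, which simply observes that $F_2(G_1)$ and $F_2(G_2)$ are vertex-disjoint induced subgraphs of $F_2(G)$ on $\bb_1$ and $\bb_2$ and that no edges join them. Your explicit check that $|A \triangle B| = 4$ for disjoint $2$-sets just fills in the detail the paper leaves implicit.
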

We define an independent set of $F_2(G_1+G_2)$ that will be useful to obtain an upper bound for $\alpha(F_2(E_n+G_2))$ as will be shown in Lemma~\ref{conjasociado}.

\begin{definition}\label{indeprs} 
\begin{itemize}
\item[]
\item[i)] Let $I_1$ and $I_2$ be independent sets in $G_1$ and $G_2$, respectively.  Let $\GG_i=F_2(G_i-I_i)$, for $i=1, 2$. Let $\ii_i$ be an independent set of $\GG_i$ such that $|\ii_i|=\alpha(\GG_i)$, for $i=1, 2$. Let $\ii_\bb=\ii_1 \cup \ii_2$, $\ii_\rr=\{\{u, v\} \colon u \in I_1, v \in I_2\}$ and $\ii_{I_1, I_2}=\ii_\rr \cup \ii_\bb$. Note that $\ii_{I_1, I_2}$ is an independent set in $F_2(G)$ and we call it an independent set associated with $I_1$  and $I_2$.
\item[ii)] Let $G=E_n+H$ and $\ii$ be an independent set of $F_2(G)$ such that $\ii \cap \rr \neq \emptyset$. Let $N_\ii(u)=\{v \in V(H) \colon \{u, v\} \in \ii\}$, for every $u \in V(E_n)$. Note that $N_\ii(u)$ is an independent set of $H$. Let $u'$ be any but fixed vertex in $V(E_n)$ such that $|N_\ii(u')| \geq |N_\ii(u)|$, for every $u \in V(E_n)$.  Let $S_1=\{u \in V(E_n) \colon N_\ii(u) \neq \emptyset\}$ and $S_2=N_\ii(u')$. We have that $S_1$ and $S_2$ are nonempty independent sets of $E_n$ and $H$, respectively. We define $\ii_{S_1, S_2}$ as the independent set associated with $S_1$ and $S_2$ as in part (i) of this definition, where $G_1=E_n$ and $G_2=H$.
\end{itemize}
\end{definition}
We have that $|\ii_{S_1, S_2}|=|S_1||S_2|+\alpha(\GG_1)+\alpha(\GG_2)$.

\begin{lemma}\label{conjasociado}
Let $G=E_n+H$. Let $\ii$ be an independent set of $F_2(G)$ such that $\ii \cap \rr \neq \emptyset$. Then there exists independent sets $S_1$ and $S_2$ of $E_n$ and  $H$, respectively, such that $|\ii_{S_1, S_2}| \geq |\ii|$. 
\end{lemma}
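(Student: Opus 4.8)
The plan is to take the sets $S_1=\{u\in V(E_n)\colon N_\ii(u)\neq\emptyset\}$ and $S_2=N_\ii(u')$ exactly as in Definition~\ref{indeprs}(ii); since $\ii\cap\rr\neq\emptyset$ these are nonempty independent sets of $E_n$ and $H$, so $\ii_{S_1,S_2}$ is well defined and, as recorded just before the statement, $|\ii_{S_1,S_2}|=|S_1||S_2|+\alpha(\GG_1)+\alpha(\GG_2)$, where $\GG_1=F_2(E_n-S_1)$ and $\GG_2=F_2(H-S_2)$. I would then split $\ii$ along the partition $\{\bb_1,\bb_2,\rr\}$ of $V(F_2(G))$ and bound each of the three pieces $\ii\cap\bb_1$, $\ii\cap\bb_2$, $\ii\cap\rr$ by the matching summand of $|\ii_{S_1,S_2}|$.

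For the $\rr$-piece: for each $u\in V(E_n)$ the elements of $\ii\cap\rr$ containing $u$ are exactly the pairs $\{u,v\}$ with $v\in N_\ii(u)$, so $|\ii\cap\rr|=\sum_{u\in V(E_n)}|N_\ii(u)|=\sum_{u\in S_1}|N_\ii(u)|\le|S_1|\cdot|N_\ii(u')|=|S_1||S_2|$, the inequality being precisely the maximality that defines $u'$. For the $\bb_1$- and $\bb_2$-pieces the one real point is a forbidden-configuration observation: two vertices $\{x,y\}$ and $\{x,z\}$ of $F_2(G)$ sharing exactly one element are adjacent if and only if $\{y,z\}\in E(G)$, so $\ii$ cannot contain both whenever $\{y,z\}$ is a join edge of $G=E_n+H$. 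If $\{u,w\}\in\ii\cap\bb_1$ (so $u,w\in V(E_n)$) and $u\in S_1$, there is $v\in N_\ii(u)\subseteq V(H)$ with $\{u,v\}\in\ii$; then $\{w,v\}$ is a join edge and $\{u,w\},\{u,v\}$ share the element $u$, a contradiction, so $\ii\cap\bb_1\subseteq C_2^{V(E_n)\setminus S_1}$ and hence $|\ii\cap\bb_1|\le\binom{n-|S_1|}{2}=\alpha(\GG_1)$ by Theorem~\ref{indepenk}(iii). Likewise, if $\{v,w\}\in\ii\cap\bb_2$ (so $v,w\in V(H)$) and $v\in S_2=N_\ii(u')$, then $\{u',v\}\in\ii$, the pairs $\{v,w\},\{u',v\}$ share the element $v$, and $\{w,u'\}$ is a join edge, again a contradiction; hence $\ii\cap\bb_2\subseteq C_2^{V(H)\setminus S_2}$. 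Since $\GG_2=F_2(H-S_2)$ is the subgraph of $F_2(H)$ induced on $C_2^{V(H)\setminus S_2}$ and $\ii\cap\bb_2$ is independent in $F_2(H)$ (an induced subgraph of $F_2(G)$), this gives $|\ii\cap\bb_2|\le\alpha(\GG_2)$.

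Adding the three bounds yields $|\ii|=|\ii\cap\bb_1|+|\ii\cap\bb_2|+|\ii\cap\rr|\le\binom{n-|S_1|}{2}+\alpha(\GG_2)+|S_1||S_2|=|\ii_{S_1,S_2}|$, which is the lemma. I do not expect a genuine obstacle here: everything reduces to the elementary symmetric-difference computations above, and the only care needed is to note that the incidental distinctness conditions in them ($w\neq v$, $u'\neq v,w$) hold automatically because one endpoint lies in $V(E_n)$ and the other in $V(H)$, and that $S_1,S_2\neq\emptyset$ so that $\ii_{S_1,S_2}$ is defined. The substantive work happens only afterwards, when $\alpha(\GG_2)$ must be evaluated for the concrete choices $H=P_m$, $C_m$, $K_n$ by means of Theorem~\ref{propcaminomenos} and Theorem~\ref{indepenk}.
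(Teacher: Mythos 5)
Your proof is correct and follows essentially the same route as the paper's: split $\ii$ along the partition $\{\bb_1,\bb_2,\rr\}$, bound $|\ii\cap\rr|$ by $|S_1||S_2|$ via the maximality of $u'$, and bound the $\bb$-pieces by $\alpha(\GG_1)$ and $\alpha(\GG_2)$. The only difference is that you explicitly justify, via the join-edge argument, why $\ii\cap\bb_i$ contains no pair meeting $S_i$ — a point the paper asserts without proof when it states that $\ii\cap\bb_1$ and $\ii\cap\bb_2$ are independent sets of $\GG_1$ and $\GG_2$.
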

\begin{proof}
The existence of $S_1$ and $S_2$ is showed in Definition~\ref{indeprs}(\textit{ii}). Let $\ii_1, \ii_2, \ii_\bb$ and $\ii_\rr$ be as in Definition~\ref{indeprs}(i) with $I_1=S_1$ and $I_2=S_2$. As $\ii_\rr=\ii_{S_1,S_2} \cap \rr$ and $\ii_\bb=\ii_{S_1,S_2} \cap \bb$, then in order to show that $|\ii_{S_1,S_2}| \geq |\ii|$ it is enough to verify that $|\ii_\rr| \geq |\ii \cap \rr|$ and $|\ii_\bb| \geq |\ii \cap \bb|$. Let $s_1=|S_1|$. First note that 
\[
|\ii \cap \rr|=\sum_{u \in S_1} |N_\ii(u)|
\leq \sum_{u \in S_1} |N_\ii(u')|
=s_1|N_\ii(u')|
=|\ii_\rr|.
\]
Now, as $\ii \cap \bb_1$ and $\ii \cap \bb_2$ are independent sets in $\GG_1$ and $\GG_2$, respectively, we have that $|\ii  \cap \bb_1| \leq |\ii_1|$, $|\ii  \cap \bb_2| \leq |\ii_2|$. Therefore $|\ii  \cap \bb|\leq |\ii_1|+|\ii_2|=|\ii_\bb|$. 
\end{proof}
Let $S_1$, $S_2$ and $\GG_2$ be as in Definition~\ref{indeprs}(\textit{ii}). Then 
\begin{align}
|\ii_{S_1, S_2}|=|S_1||S_2|+\binom{n-|S_1|}{2}+\alpha(\GG_2).
\end{align}

\subsection{Independence number of $F_2(P_m-A)$}

The following result will be useful in the proofs of Theorems~\ref{tabanico} and \ref{trueda}. If $A$ is a proper subset of $P_m$, then every component of $P_m-A$ is isomorphic to $P_i$ for some $1\leq i \leq m$. The following result expresses $\alpha(F_2(P_m-A))$ in terms of the number of odd components of $P_m-A$.

\begin{theorem}\label{propcaminomenos} 
Let $G$ be a graph of order $m \geq 2$ consisting of $\kappa$ components such that each is isomorphic to $P_i$, for some $1\leq i \leq m$. Then $\alpha(F_2(G))=\frac{m^2+t^2-2t}{4}$, where $t$ is the number of odd components of $G$.  
\end{theorem}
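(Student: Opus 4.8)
The plan is to exploit the fact that, for a disjoint union of graphs, $F_2$ decomposes in a very structured way. Write $G=G_1\sqcup\cdots\sqcup G_\kappa$ with $G_j\cong P_{n_j}$ and $\sum_{j}n_j=m$. First I would verify that, as graphs,
\[
F_2(G)=\Bigl(\bigsqcup_{j=1}^{\kappa}F_2(G_j)\Bigr)\ \sqcup\ \Bigl(\bigsqcup_{1\le i<j\le\kappa}\bigl(G_i\mathbin{\square}G_j\bigr)\Bigr),
\]
where $\mathbin{\square}$ denotes the Cartesian product of graphs. The check is a short case analysis on a pair of distinct $2$-subsets $\{x,y\},\{x',y'\}$: their symmetric difference is an edge of $G$ only when they share exactly one vertex and the two remaining vertices are adjacent in $G$, hence lie in a common component; bookkeeping of which components $x,y,x',y'$ belong to shows the surviving edges are exactly those inside a single $F_2(G_j)$ (all four vertices in $G_j$) and those inside a single block $G_i\mathbin{\square}G_j$, and that on a block of ``mixed'' $2$-subsets $\{u,v\}$ with $u\in V(G_i)$, $v\in V(G_j)$ the allowed moves — replace $u$ by a neighbour in $G_i$, or $v$ by a neighbour in $G_j$ — are precisely the edges of $G_i\mathbin{\square}G_j$. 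Since the independence number is additive over disjoint unions of graphs, this gives
\[
\alpha(F_2(G))=\sum_{j=1}^{\kappa}\alpha(F_2(G_j))+\sum_{1\le i<j\le\kappa}\alpha\bigl(G_i\mathbin{\square}G_j\bigr).
\]

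Next I would evaluate the two families of summands. By Theorem~\ref{indepenk}(i) — and trivially when $n=1$, since then $F_2(P_1)$ has no vertices — we have $\alpha(F_2(P_n))=\lfloor n^2/4\rfloor$ for every $n\ge 1$. For grids, $\alpha(P_a\mathbin{\square}P_b)=\lceil ab/2\rceil$: the lower bound is the larger colour class of the proper $2$-colouring of the $a\times b$ grid by the parity of the coordinate sum, which has $\lceil ab/2\rceil$ vertices; the upper bound holds because the $a\times b$ grid has a Hamiltonian path (a row-by-row snake), hence a matching of size $\lfloor ab/2\rfloor$, and any matching of size $\mu$ forces $\alpha\le|V|-\mu$.

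It then remains a computation. Set $r_j=n_j\bmod 2\in\{0,1\}$, so that $t=\sum_{j}r_j$. Since $n_j^2\equiv r_j\pmod 4$ we get $\lfloor n_j^2/4\rfloor=(n_j^2-r_j)/4$, and since $n_in_j$ is odd if and only if $r_ir_j=1$ we get $\lceil n_in_j/2\rceil=(n_in_j+r_ir_j)/2$. Substituting into the displayed sum and using $\sum_{j}n_j^2+2\sum_{i<j}n_in_j=m^2$ together with $\sum_{i<j}r_ir_j=\binom{t}{2}=(t^2-t)/2$ (which follows from $t^2=\sum_{j}r_j^2+2\sum_{i<j}r_ir_j=t+2\sum_{i<j}r_ir_j$, using $r_j^2=r_j$), one finds
\[
\alpha(F_2(G))=\frac{m^2}{4}-\frac{t}{4}+\frac{t^2-t}{4}=\frac{m^2+t^2-2t}{4}.
\]

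I expect the only genuinely delicate point to be the structural decomposition of $F_2(G)$: one must be careful that there is no edge of $F_2(G)$ between two distinct blocks $G_i\mathbin{\square}G_j$ and $G_k\mathbin{\square}G_l$, nor between a block and an $F_2(G_j)$. Once that is nailed down, the rest is the standard value of the independence number of a grid graph together with elementary arithmetic.
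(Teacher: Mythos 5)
Your proof is correct, but it takes a genuinely different route from the paper. The paper argues directly: it builds an explicit independent set $I=U\cup V$ from parity classes of the vertex indices inside and across the path components, computes $|I|$ to get the lower bound, and then obtains the matching upper bound by a parity-shifting map that sends an arbitrary independent set $J$ injectively into $U\cup V$. You instead observe the structural decomposition $F_2(G)=\bigl(\bigsqcup_j F_2(G_j)\bigr)\sqcup\bigl(\bigsqcup_{i<j}G_i\mathbin{\square}G_j\bigr)$ for a disjoint union $G=\bigsqcup_j G_j$ (your case analysis for why no edges cross blocks is right: adjacency in $F_2$ forces the two non-shared vertices into a common component, which pins down the block), then invoke $\alpha(F_2(P_n))=\lfloor n^2/4\rfloor$ and the grid value $\alpha(P_a\mathbin{\square}P_b)=\lceil ab/2\rceil$ (your two-coloring lower bound and Hamiltonian-path/matching upper bound are both fine), and finish with the same arithmetic. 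Your approach is more modular and conceptual: it isolates the only real content in two known independence numbers, and the decomposition lemma applies verbatim to $F_2$ of any disconnected graph, not just unions of paths. What the paper's approach buys is self-containedness — it does not rely on Theorem~\ref{indepenk}(i) or on facts about grid graphs — and its explicit parity-class set $I$ is reused conceptually elsewhere. Both yield identical totals, since the paper's $V$ and $U$ count exactly your $\sum_j\lfloor m_j^2/4\rfloor$ and $\sum_{i<j}\lceil m_im_j/2\rceil$ respectively.
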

\begin{proof}

Let $G_1,\dots,G_\kappa$ be the components of $G$ ordered in such a way that $|G_i|$ is odd for $0\leq i \leq t$ ($i=0$ means that $G$ has not odd components) and $|G_i|$ is even for $t <  i \leq \kappa$. Let $m_i=|G_i|$ for every $i$.  Let $V(G_i)=\{u_{i1},\dots,u_{im_i }\}$ and $E(G_i)=\{\{u_{ij},u_{i(j+1)}\} ~| 1\leq j \leq m_i-1\}$. Now we define an independent set $I$ of $F_2(G)$. Let  
\[
U=\{\{u_{il},u_{jk}\} \in V(F_2(G)) \mid 1\leq i<j\leq \kappa, \text{ where } l \text{ and } k \text{ have the same parity}\}.
\]
Let $V_i=\{\{u_{il}, u_{ik }\} \in V(F_2(G)) \mid \text{ where } l \text{ and } k \text{ have different parity} \}$ and $V=\bigcup\limits_{i=1}^\kappa V_i$.  Let $I=U \cup V$. It is easy to check that $I$ is an independent set of $F_2(G)$. As $U$ and $V$ are disjoint sets, $|I|=|U|+|V|$. Note first that
\[
|V| =\sum\limits_{i=1}^\kappa\left\lceil\frac{m_i}{2}\right\rceil\left\lfloor\frac{m_i}{2}\right\rfloor
 =\sum\limits_{i=1}^\kappa\left\lfloor\frac{m_i^2}{4}\right\rfloor
 =\sum\limits_{i=1}^t\frac{m_i^2-1}{4}+\sum\limits_{i=t+1}^\kappa\frac{m_i^2}{4}
 =\left(\sum\limits_{i=1}^\kappa\frac{m_i^2}{4}\right)-\frac{t}{4}.
\]

To calculate $|U|$, we run over all components of $G$. 

\begin{align*}
|U| & =\sum_{i=1}^{\kappa-1}\left( \left\lceil\frac{m_i}{2}\right\rceil \sum_{j=i+1}^{\kappa} \left\lceil\frac{m_j}{2}\right\rceil  + \left\lfloor\frac{m_i}{2}\right\rfloor \sum_{j=i+1}^{\kappa}\left\lfloor\frac{m_j}{2}\right\rfloor   \right)\\
& =\sum_{i=1}^{\kappa-1}\left( \left\lceil\frac{m_i}{2}\right\rceil \left( \sum_{j=i+1}^{\kappa} \frac{m_j}{2}+ \sum_{j=i+1}^{t} \frac{1}{2} \right)  + \left\lfloor\frac{m_i}{2}\right\rfloor \left(  \sum_{j=i+1}^{\kappa} \frac{m_j}{2}- \sum_{j=i+1}^{t} \frac{1}{2} \right)   \right)\\
&=\sum_{i=1}^{\kappa-1} \left(   \left( \left\lceil\frac{m_i}{2}\right\rceil + \left\lfloor\frac{m_i}{2}\right\rfloor \right) \sum_{j=i+1}^{\kappa} \frac{m_j}{2} + \left( \left\lceil\frac{m_i}{2}\right\rceil - \left\lfloor\frac{m_i}{2}\right\rfloor \right)\sum_{j=i+1}^{t} \frac{1}{2}  \right)\\
&=\sum_{i=1}^{\kappa-1} \left(   m_i \sum_{j=i+1}^{\kappa} \frac{m_j}{2} + (m_i \bmod 2 )\sum_{j=i+1}^{t} \frac{1}{2}  \right)\\
&=\sum_{i=1}^{\kappa-1}   m_i \sum_{j=i+1}^{\kappa} \frac{m_j}{2}+ \sum_{i=1}^{t} \sum_{j=i+1}^{t} \frac{1}{2}  \\
&=\sum_{i=1}^{\kappa-1}   m_i \sum_{j=i+1}^{\kappa} \frac{m_j}{2}+ \frac{t^2-t}{4}.
\end{align*}

Then
\begin{align*}
|I|=|U|+|V| & =2\sum_{i=1}^{\kappa-1}   \frac{m_i}{2} \sum_{j=i+1}^{\kappa} \frac{m_j}{2}  + \sum\limits_{i=1}^\kappa\frac{m_i^2}{4}+\frac{t^2-2t}{4}
& = \left(\sum_{i=1}^{\kappa} \frac{m_i}{2} \right)^2+  \frac{t^2-2t}{4}\\
& =\frac{m^2+t^2-2t}{4}.
\end{align*}
 Therefore, $\alpha(F_2(G))\geq \frac{m^2+t^2-2t}{4}$. Now, let $J$ be an independent set of $F_2(G)$. Let $D=\{\{u_{il},u_{jk}\} \mid 1\leq i<j\leq \kappa\}$. Note that $J$ is the disjoint union of two sets $J'$ and $J''$, where $J'=\cup_{i=1}^\kappa J'_i$, with $J'_i \subset C_2^{V(G_i)}$, for every $i$, and  $J'' \subset D$. For a vertex $\{u_{il},u_{ik}\} \in J'_i$ we assume, without loss of generality, that $l<k$. Let $Q_i'$ be the set obtained  from $J_i'$ by changing all the vertices  $\{u_{il},u_{ik}\}$ in $J'_i$ such that $l$ and $k$ have the same parity to $\{u_{il},u_{ik-1}\}$. In this way, $Q_i' \subset V_i$ and $|J_i'|=|Q_i'| \leq |V_i|$, for every $i$. For a vertex $\{u_{il},u_{jk}\} \in J''$ we assume, without loss of generality, that $i<j$. We construct a set $Q''$ from $J''$ by changing all the vertices  $\{u_{il},u_{jk}\}$ in $J''$ such that $l$ and $k$ have different parity in the following way: if $l$ is even we change $\{u_{il},u_{jk}\}$ to $\{u_{il-1},u_{jk}\}$ and if $l$ is odd we change $\{u_{il},u_{jk}\}$ to $\{u_{il},u_{jk-1}\}$. In this way $Q'' \subset U$ and $|J''|=|Q''|\leq |U|$. Therefore $|J|=|J'|+|J''| \leq |V|+|U|=|I|=\frac{m^2+t^2-2t}{4}$.
\end{proof}

\section{Fan graphs}\label{sec3}
In this section, we obtain the independence number of the $2$-token graphs of fan graphs.  
\begin{theorem}\label{tabanico} Let $m\geq1$ and $n \geq 1$ be integers. Let $F_{n,m}=E_n+P_m$. Then 
\[ 
\alpha(F_2(F_{n,m}))=\begin{cases} 
      n\left\lceil\frac{m}{2}\right\rceil+\binom{\lfloor\frac{m}{2}\rfloor}{2} & n\in\{\frac{m+1}{2},\frac{m+3}{2}\}\\
\left\lfloor\frac{m^2}{4}\right\rfloor+\binom{n}{2} & \text{other case. }\\
  \end{cases}
\]

\end{theorem}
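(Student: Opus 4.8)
The plan is to show $\alpha(F_2(F_{n,m}))=\max\{L,R\}$ where $L:=\binom{n}{2}+\lfloor m^2/4\rfloor$ and $R:=n\lceil m/2\rceil+\binom{\lfloor m/2\rfloor}{2}$, and then to read off the two stated cases by comparing $L$ and $R$. The bound $\alpha(F_2(F_{n,m}))\ge L$ follows from Proposition~\ref{cotainfe} and Theorem~\ref{indepenk}(i),(iii). For $\alpha(F_2(F_{n,m}))\ge R$ I would use the independent set $\ii_{S_1,S_2}$ of Definition~\ref{indeprs}(i) with $S_1=V(E_n)$ and $S_2$ a maximum independent set of $P_m$ whose removal leaves $\lfloor m/2\rfloor$ isolated vertices (the odd-indexed vertices when $m$ is odd, one part of the bipartition when $m$ is even); from $|\ii_{S_1,S_2}|=|S_1||S_2|+\binom{n-|S_1|}{2}+\alpha(F_2(P_m-S_2))$ and Theorem~\ref{indepenk}(iii) this set has size $n\lceil m/2\rceil+0+\binom{\lfloor m/2\rfloor}{2}=R$.

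For the upper bound, let $\ii$ be a maximum independent set of $F_2(F_{n,m})$. If $\ii\cap\rr=\emptyset$ then, since $F_2(E_n)$ and $F_2(P_m)$ are the subgraphs of $F_2(F_{n,m})$ induced by $\bb_1$ and $\bb_2$, we get $|\ii|=|\ii\cap\bb_1|+|\ii\cap\bb_2|\le\alpha(F_2(E_n))+\alpha(F_2(P_m))=L$. If $\ii\cap\rr\ne\emptyset$, Lemma~\ref{conjasociado} gives nonempty independent sets $S_1\subseteq V(E_n)$, $S_2\subseteq V(P_m)$ with $|\ii|\le|\ii_{S_1,S_2}|=s_1s_2+\binom{n-s_1}{2}+\alpha(F_2(P_m-S_2))$, where $s_i=|S_i|$ and $1\le s_2\le\alpha(P_m)=\lceil m/2\rceil$. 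The key estimate concerns $\alpha(F_2(P_m-S_2))$: since the removed vertices are pairwise non-adjacent in $P_m$, the graph $P_m-S_2$ has at most $\tau:=\min\{s_2+1,\,m-s_2\}$ components (there are at most $s_2+1$ gaps, all nonempty, sharing $m-s_2$ vertices); hence, using Theorem~\ref{propcaminomenos} when $m-s_2\ge2$ (the cases $m-s_2\le1$ give $\alpha(F_2(P_m-S_2))=0$ and are checked directly) together with the fact that $x\mapsto x^2-2x$ is nondecreasing on $[1,\infty)$, we get $\alpha(F_2(P_m-S_2))\le\tfrac14\big((m-s_2)^2+\tau^2-2\tau\big)$. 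Write $\Phi(s_1,s_2)$ for the resulting upper bound on $|\ii_{S_1,S_2}|$.

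It then suffices to prove $\Phi(s_1,s_2)\le\max\{L,R\}$. For fixed $s_2$, $s_1\mapsto\Phi(s_1,s_2)$ is quadratic with positive leading coefficient, so $\Phi(s_1,s_2)\le\max\{\Phi(0,s_2),\Phi(n,s_2)\}$. From $\tau\le s_2+1$ and $\min_{1\le s_2\le\lceil m/2\rceil}s_2(m-s_2)=m-1$ one obtains $(m-s_2)^2+\tau^2-2\tau\le(m-s_2)^2+s_2^2-1\le(m-1)^2$, whence $\Phi(0,s_2)\le\binom{n}{2}+\tfrac14(m-1)^2\le L$. For $\Phi(n,s_2)=ns_2+\tfrac14\big((m-s_2)^2+\tau^2-2\tau\big)$ there are two subcases: if $s_2=\lceil m/2\rceil$ then $\tau=\lfloor m/2\rfloor$ and a direct computation gives $\Phi(n,s_2)=R$; if $1\le s_2\le\lceil m/2\rceil-1$ then $\tau=s_2+1$, so $\Phi(n,s_2)=ns_2+\tfrac14\big((m-s_2)^2+s_2^2-1\big)$, again quadratic in $s_2$ with positive leading coefficient, hence bounded by the larger of its values at $s_2=1$ and $s_2=\lceil m/2\rceil-1$; expanding, both of these are $\le L$ (at $s_2=1$ this reduces to $(n-1)(n-2)\ge0$; at $s_2=\lceil m/2\rceil-1$, when $m$ is odd, to $(2n-m)^2\ge1$, which holds as $2n-m$ is odd, with a similar bound when $m$ is even). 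Therefore $|\ii|\le\max\{L,R\}$, and with the lower bound $\alpha(F_2(F_{n,m}))=\max\{L,R\}$.

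Finally, with $x:=2n-m$ a direct expansion gives $R-L=-\tfrac18(x-5)(x+1)$ for $m$ odd and $R-L=-\tfrac18x(x-2)$ for $m$ even. When $m$ is even, $x$ is even so $x(x-2)\ge0$, hence $R\le L$, and since no integer $n$ lies in $\{\tfrac{m+1}{2},\tfrac{m+3}{2}\}$ the answer is $L=\binom{n}{2}+\lfloor m^2/4\rfloor$. When $m$ is odd, $R-L>0$ exactly when $x\in\{1,3\}$, i.e.\ $n\in\{\tfrac{m+1}{2},\tfrac{m+3}{2}\}$, where the answer is $R=n\lceil m/2\rceil+\binom{\lfloor m/2\rfloor}{2}$, and otherwise it is $L$; this is exactly the stated formula. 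The step I expect to be the main obstacle is getting the estimate for $\alpha(F_2(P_m-S_2))$ sharp enough: the naive bound ``at most $s_2+1$ components'' overshoots $R$ at $s_2=\lceil m/2\rceil$, so one must also use ``at most $m-s_2$ components'', i.e.\ the truncation $\tau=\min\{s_2+1,m-s_2\}$; after that, the optimisation in $(s_1,s_2)$ is routine convexity together with a handful of elementary inequalities, as is the final bookkeeping.
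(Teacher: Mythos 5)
Your proposal is correct, and it rests on exactly the same two pillars as the paper's proof: Lemma~\ref{conjasociado} to replace an arbitrary independent set meeting $\rr$ by one of the form $\ii_{S_1,S_2}$, and Theorem~\ref{propcaminomenos} to evaluate $\alpha(F_2(P_m-S_2))$, reducing everything to bounding $rs+\binom{n-r}{2}+\tfrac{(m-s)^2+t^2-2t}{4}$. Where you genuinely diverge is in how that quantity is optimized. The paper bounds $t$ by $s-1$, $s$ or $s+1$ according to how many endpoints of $P_m$ lie in $S_2$ (Claim~\ref{propaux}) and then runs a parity-and-boundary case analysis (Cases 1.1--2.3), with the exceptional values $n\in\{\tfrac{m+1}{2},\tfrac{m+3}{2}\}$ surfacing inside Case 2.3. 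You instead use the uniform bound $t\le\tau=\min\{s+1,m-s\}$, exploit convexity to push $(s_1,s_2)$ to the corners of its range, and frame the answer from the outset as $\max\{L,R\}$, with the case split falling out of the factorizations $R-L=-\tfrac18(x-5)(x+1)$ ($m$ odd) and $-\tfrac18x(x-2)$ ($m$ even), $x=2n-m$. I checked the corner evaluations (in particular $\Phi(n,\lceil m/2\rceil)=R$, the $(2n-m)^2\ge1$ step at $s_2=\lceil m/2\rceil-1$, and the $R-L$ identities) and they are right; your organization is arguably cleaner and makes it more transparent why exactly those two values of $n$ are exceptional, at the cost of leaving a few small verifications implicit (the cases $m-s_2\le1$, the even-$m$ endpoint computation). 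One point to tidy: the inequality $t^2-2t\le\tau^2-2\tau$ is justified by monotonicity only for $t\ge1$; when $t=0$ you need the separate (trivial) observation that $0\le\tau^2-2\tau$ because $\tau\ge2$ whenever $m-s_2\ge2$ and $s_2\ge1$ --- this is harmless but should be said, since the function $x\mapsto x^2-2x$ is not monotone at $0$.
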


\begin{proof}
When $m=1$, $F_{n, 1} \simeq K_{n, 1}$. In~\cite{alba} showed that $\alpha(F_2(K_{n,1}))=n$, for $n \in \{1, 2\}$ and $\alpha(F_2(K_{n,1}))=\binom{n}{2}$, for $n \geq 3$. Therefore, we can assume that $m \geq 2$. 

Let $G=F_{n,m}$. By Proposition~\ref{cotainfe} and parts (\textit{i}) and (\textit{iii}) of Theorem~\ref{indepenk} we have that $\alpha(F_2(G))\geq \left\lfloor\frac{m^2}{4}\right\rfloor+\binom{n}{2}$. 

Let $\ii$ be an independent set of $F_2(G)$. If $\ii \cap \rr =\emptyset$, then $\ii \subseteq \bb$ and hence $|\ii| \leq \left\lfloor\frac{m^2}{4}\right\rfloor+\binom{n}{2}$. If $\ii \cap \rr \neq \emptyset$, then by Lemma~\ref{conjasociado} it is enough to show that $ |\ii_{S_1, S_2}| \leq \left\lfloor\frac{m^2}{4}\right\rfloor+\binom{n}{2}$, where $\ii_{S_1, S_2}$ is an independent set of $F_2(G)$ associated with $\ii$ as in Definition~\ref{indeprs}(\textit{ii}), with $H=P_m$. Let  $r=|S_1|$ and $s=|S_2|$. Then 
$|\ii_{S_1, S_2}|  =rs+\binom{n-r}{2} +\alpha(\GG_2).$
Where $\GG_2=F_2(P_m-S_2)$. By Theorem~\ref{propcaminomenos} we have that
\begin{align}\label{cardinalisr}
|\ii_{S_1, S_2}| =rs+\binom{n-r}{2} + \frac{(m-s)^2+t^2-2t}{4},
\end{align}
where $t$ is the number of odd components of $P_m-S_2$. Let $\ka$ be the number of components of $P_m-S_2$. As $S_2$ is an independent set of $P_m$, then $\ka \in \{s-1, s, s+1\}$ depending on the number of end vertices of $P_m$ in $S_2$. 

We will show that $\left\lfloor\frac{m^2}{4}\right\rfloor+\binom{n}{2}-|\ii_{S_1, S_2}|\geq0$, except when $n \in \{\frac{m+1}{2},\frac{m+3}{2}\}$. First note that
\[
\left\lfloor\frac{m^2}{4}\right\rfloor=\begin{cases} 
      \frac{m^2}{4}  & m \text{ even }\\
     \frac{m^2}{4}-\frac{1}{4}& m \text{ odd }.\\
  \end{cases}
  \]
Let $n_1:=n-r$ and $m_1:=m-2s$. The proofs of the following inequalities are by direct calculations.

\begin{claim}\label{propaux}

\begin{enumerate} Let $c=0$ when $m$ is even and $c=-\frac{1}{4}$ when $m$ is odd. 
\item[i)] If $t\leq s-1$, then

\begin{align*}
\left\lfloor\frac{m^2}{4}\right\rfloor+\binom{n}{2}-|\ii_{S_1,S_2}| &\geq\frac{(r-s)(r-s-1)}{2}+\frac{s(m_1+1)+2rn_1}{2}-\frac{3}{4}+c. 
\end{align*}
\item[ii)] If $t \leq s$, then
\begin{align*}
\left\lfloor\frac{m^2}{4}\right\rfloor+\binom{n}{2}-|\ii_{S_1,S_2}| &\geq \frac{(r-s)(r-s-1)}{2}+\frac{sm_1+2rn_1}{2}+c. 
\end{align*}
\item[iii)] If $t \leq s+1$, then
\begin{align*}
\left\lfloor\frac{m^2}{4}\right\rfloor+\binom{n}{2}-|\ii_{S_1,S_2}| &\geq \frac{(r-s)^2}{2}+\frac{2rn_1-(r-sm_1)}{2}+\frac{1}{4}+c. 
\end{align*}

\end{enumerate}

\end{claim}
We proceed by cases depending on the parity of $m$. 
\begin{itemize}

\item[Case 1.] Suppose that $m$ is even. Then $1\leq s\leq \frac{m}{2}$ and $1\leq r\leq n$. Even more $n\not\in\{\frac{m+1}{2},\frac{m+3}{2}\}$. 

\begin{enumerate}
\item[Case 1.1.] Suppose that $s=\frac{m}{2}$. In this case $\ka \neq s+1$, and hence $t\leq \ka \leq s$. By Claim~\ref{propaux}(\textit{ii}), with $m_1=0$ and $c=0$, we have
\small{
\begin{align*}
\left\lfloor\frac{m^2}{4}\right\rfloor+\binom{n}{2}-|\ii_{S_1,S_2}|
& \geq \frac{(r-s)(r-s-1)}{2}+\frac{2rn_1}{2}\geq 0, 
\end{align*}}
because $r, s \geq 1$ and $n_1\geq 0$.

\item[Case 1.2.] Suppose that $s<\frac{m}{2}$. We don't know the exact value for $\ka$ or $t$, but it is sufficient to consider the maximum possible value for $t$, equal to $s+1$. Since $s<\frac{m}{2}$ and $m$ is even, then $m_1 \geq 2$. By Claim~\ref{propaux}(\textit{iii}), with $c=0$ and $m_1\geq 2$, we have

\small{
\begin{align*}
\left\lfloor\frac{m^2}{4}\right\rfloor+\binom{n}{2}-|\ii_{S_1,S_2}| & \geq \frac{(r-s)^2}{2}+\frac{2rn_1-(r-2s)}{2}+\frac{1}{4}\geq \frac{(r-s)(r-s-1)}{2}\geq 0,
\end{align*}}
where the last inequality follows because $s, r\geq1$ and $n_1\geq 0$.

\end{enumerate}
\item[Case 2.]  Suppose that $m$ is odd. In this case $1\leq s\leq \frac{m+1}{2}$.

\begin{enumerate}

\item[Case 2.1.] Suppose that $s<\frac{m+1}{2}$. As $m\geq 3$, then $m_1 \geq 1$. It is enough to consider $t\leq s+1$. By Claim~\ref{propaux}(\textit{iii}), with $c=-\frac{1}{4}$ and $m_1\geq 1$, we have

\small{
\begin{align*}
\left\lfloor\frac{m^2}{4}\right\rfloor+\binom{n}{2}-|\ii_{S_1,S_2}|&\geq \frac{(r-s)^2}{2}+\frac{2rn_1-(r-s)}{2}\geq \frac{(r-s)(r-s-1)}{2}\geq 0,
\end{align*}}
because $r, s \geq 1$ and $n_1\geq 0$.

\item[Case 2.2.] Suppose that $s=\frac{m+1}{2}$ and $ r<n$. In this case $t=\ka=s-1$ because $|G_i|=1$, where $G_i$ is a component of $P_m-S_2$, for every $i=1,\dots,\ka$. We have that $s\geq 2$ and $n_1\geq 1$ because $m\geq3$ and $n > r$, respectively. By Claim~\ref{propaux}(\textit{i}), with $c=-\frac{1}{4}$ and $m_1= -1$, we have

\small{
\begin{align*}
\left\lfloor\frac{m^2}{4}\right\rfloor+\binom{n}{2}-|\ii_{S_1,S_2}|&\geq\frac{(r-s)(r-s-1)}{2}+\frac{2n_1 r}{2}-1 \geq0,
\end{align*}}

because $s\geq2$, $r \geq 1$ and $n_1\geq 1$.

\item[Case 2.3.] Suppose that $s=\frac{m+1}{2}$ and $r=n$. Similarly as in the previous case, $t=s-1$. Let  $\gamma:=n-s$. By Claim~\ref{propaux}(\textit{i}), with $c=-\frac{1}{4}$, $n_1=0$ and $m_1= -1$, we have
\small{
\begin{align*}
\left\lfloor\frac{m^2}{4}\right\rfloor+\binom{n}{2}-|\ii_{S_1,S_2}|&\geq\frac{(n-s)(n-s-1)}{2}-1=\frac{\gamma^2-\gamma}{2}-1.
\end{align*}}

Note that $\frac{\gamma^2-\gamma}{2}-1\geq 0$ if and only if $\gamma \not \in\{0,1\}$. That is, $|\ii_{S_1,S_2}| \leq \left\lfloor\frac{m^2}{4}\right\rfloor+\binom{n}{2}$, for $n\not\in\{\frac{m+1}{2}, \frac{m+3}{2}\}$. Therefore $\alpha(F_2(G))\leq \left\lfloor\frac{m^2}{4}\right\rfloor+\binom{n}{2}$, for $n \not \in\{\frac{m+1}{2},\frac{m+3}{2}\}$. 

The reminder case is when $n  \in\{\frac{m+1}{2},\frac{m+3}{2}\}$. As $s=\frac{m+1}{2}$, that is equal to $\alpha(P_m)$, and $r=n$, that is equal to $\alpha(E_n)$, then $\ii$ is a maximum independent set of $F_2(G)$ with the property that $\ii \cap \rr \neq \emptyset$. By Lemma~\ref{conjasociado}, $|\ii_{S_1,S_2}| \geq \ii $ and hence $|\ii_{S_1,S_2}|$ is an upper bound for $\alpha(F_2(G))$. In fact, the set $\ii_{S_1,S_2}$ also provides a lower bound for $\alpha(F_2(F_{n,m}))$. We have that  
\small{
\begin{align*}
|\ii_{S_1,S_2}|
&=rs+\frac{(m-s)^2}{4}+\frac{t^2-2t}{4}=n\left\lceil\frac{m}{2}\right\rceil+\binom{\left\lfloor\frac{m}{2}\right\rfloor}{2},
\end{align*}}   

because $s=\frac{m+1}{2}=\lceil\frac{m}{2}\rceil$ ($m$ is odd), $r=n$ and $t=s-1$. Therefore
$
\alpha(F_2(F_{n,m}))=\left\lceil m/2 \right\rceil+\binom{\lfloor m/2 \rfloor}{2},
$
 for $n\in\{\frac{m+1}{2},\frac{m+3}{2}\}$.
 \end{enumerate}
 \end{itemize}
\end{proof}

\section{Wheel graphs}\label{sec4}
In this section, we obtain the independence number of the $2$-token graph of wheel graphs. 
Let $W_{n,m}=E_n+C_m$. It is easy to check that $\alpha(F_2(W_{3,1}))=2$ and $\alpha(F_2(W_{3,2}))=3$.

\begin{theorem}\label{trueda} Let $m\geq 3$ and $n\geq 1$. Then
  \[
  \alpha(F_2(W_{n,m}))=\left\lfloor\frac{m}{2}\left\lfloor\frac{m}{2}\right\rfloor\right\rfloor + \binom{n}{2},
  \]
  except when $(m, n) \in \{(3, 1), (3, 2)\}$.
\end{theorem}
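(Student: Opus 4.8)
The plan is to follow the same scheme that proved Theorem~\ref{tabanico}, replacing $P_m$ by $C_m$ throughout. First I would dispose of the small cases $(m,n)\in\{(3,1),(3,2)\}$ by the direct computation already mentioned in the text, so from now on assume we are outside those. For the lower bound, Proposition~\ref{cotainfe} together with parts (\textit{ii}) and (\textit{iii}) of Theorem~\ref{indepenk} gives $\alpha(F_2(W_{n,m}))\geq \left\lfloor\frac{m}{2}\left\lfloor\frac{m}{2}\right\rfloor\right\rfloor+\binom{n}{2}$, since $F_2(C_m)$ and $F_2(E_n)$ sit as induced subgraphs on $\bb_2$ and $\bb_1$ respectively.

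For the upper bound, let $\ii$ be an independent set of $F_2(W_{n,m})$. If $\ii\cap\rr=\emptyset$ then $\ii\subseteq\bb$ and $|\ii|\leq \alpha(F_2(E_n))+\alpha(F_2(C_m))=\binom{n}{2}+\left\lfloor\frac{m}{2}\left\lfloor\frac{m}{2}\right\rfloor\right\rfloor$, as needed. If $\ii\cap\rr\neq\emptyset$, then by Lemma~\ref{conjasociado} it suffices to bound $|\ii_{S_1,S_2}|$ for the associated independent set with $H=C_m$; writing $r=|S_1|$, $s=|S_2|$ and using the formula after Lemma~\ref{conjasociado}, we get $|\ii_{S_1,S_2}|=rs+\binom{n-r}{2}+\alpha(F_2(C_m-S_2))$. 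The key geometric point is that $S_2$ is an independent set of the cycle $C_m$, so $C_m-S_2$ is a disjoint union of paths; moreover the number of components of $C_m-S_2$ equals $|S_2|=s$ exactly (each deleted vertex of the cycle merges one ``gap,'' so removing $s$ independent vertices from $C_m$ leaves exactly $s$ path-components, some possibly empty when two deleted vertices are adjacent — but independence forbids that, so all $s$ gaps are nonempty paths), and $C_m-S_2$ has order $m-s$. Applying Theorem~\ref{propcaminomenos} to $C_m-S_2$ yields $\alpha(F_2(C_m-S_2))=\frac{(m-s)^2+t^2-2t}{4}$ where $t\in\{0,1,\dots,s\}$ is the number of odd path-components; in particular $t\leq s$, which is the cleaner analogue of Claim~\ref{propaux}(\textit{ii}) and sidesteps the troublesome $t=s+1$ case that forced the exceptional branch in the fan graph.

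So the heart of the argument is to show $\left\lfloor\frac{m}{2}\left\lfloor\frac{m}{2}\right\rfloor\right\rfloor+\binom{n}{2}-|\ii_{S_1,S_2}|\geq 0$ using $t\leq s$, $1\leq r\leq n$, $1\leq s\leq\lfloor m/2\rfloor$, and $m-s=|C_m-S_2|\geq 0$. Substituting the formula, this reduces (after the same kind of bookkeeping as in Claim~\ref{propaux}, splitting on the parity of $m$ to evaluate $\left\lfloor\frac{m}{2}\left\lfloor\frac{m}{2}\right\rfloor\right\rfloor$ explicitly as $m^2/4$ or $(m^2-1)/4$ or $(m^2-m)/4\cdot$-type expressions) to a manifestly nonnegative quadratic of the shape $\frac{(r-s)(r-s-1)}{2}+\frac{s\,(m-2s)+2r(n-r)}{2}+(\text{small constant})\geq 0$, which holds because each summand is $\geq 0$ under $r,s\geq 1$, $n-r\geq 0$, and $m-2s\geq 0$ (the latter since $s\leq\lfloor m/2\rfloor$). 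I expect the main obstacle to be the boundary case $s=\lfloor m/2\rfloor$ with $m$ odd: then $m-2s=1$ rather than $0$, but the extra $+1$ helps rather than hurts, so no exceptional family should appear beyond $(3,1),(3,2)$ — still, one must check carefully that when $s$ is maximal the count of odd components cannot push $t$ above $s$ and that the residual constant from the parity analysis of $\left\lfloor\frac{m}{2}\left\lfloor\frac{m}{2}\right\rfloor\right\rfloor$ does not go negative. A secondary subtlety worth a sentence is verifying the component count of $C_m-S_2$ rigorously (using that consecutive vertices of the cycle cannot both lie in the independent set $S_2$), since Theorem~\ref{propcaminomenos} is stated for disjoint unions of paths and we must confirm $C_m-S_2$ really is such a union with exactly $s$ parts of total order $m-s$.
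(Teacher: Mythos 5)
Your proposal follows the paper's proof of Theorem~\ref{trueda} essentially step for step: the lower bound from Proposition~\ref{cotainfe}, the reduction to bounding $|\ii_{S_1,S_2}|$ via Lemma~\ref{conjasociado}, the observation that $C_m-S_2$ is a disjoint union of exactly $s$ nonempty paths of total order $m-s$ so that Theorem~\ref{propcaminomenos} applies, and the key point that $s\leq\alpha(C_m)=\lfloor m/2\rfloor$ forces $m-2s\geq 0$, which is why no exceptional family beyond $(3,1),(3,2)$ appears (the paper likewise reuses Case~1 of Theorem~\ref{tabanico} verbatim for $m$ even). The one step that would not close as you wrote it is the final inequality being ``manifestly nonnegative because each summand is $\geq 0$'': for $m$ odd the target $\lfloor\frac{m}{2}\lfloor\frac{m}{2}\rfloor\rfloor$ is roughly $\frac{m^2-m}{4}$, which is about $\frac{m-1}{4}$ \emph{below} the $\lfloor m^2/4\rfloor$ appearing in Claim~\ref{propaux}, and after expanding, your ``small constant'' combines with the $\frac{s(m-2s)}{2}$ term into something of the form $\frac{(2s-1)(m_1-1)-3}{4}$ with $m_1=m-2s\geq 1$, which can be as low as $-\frac{3}{4}$; so the summands are not all nonnegative. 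You flag this yourself, but the missing ingredient is worth naming: for $m$ odd the orders of the $s$ path components sum to $m-s$, so at least one component is even and in fact $t\leq s-1$ (not merely $t\leq s$); this extra unit is what the paper uses to restore nonnegativity for $m\geq 7$, with $m=3$ and $m=5$ checked directly (and $(3,1),(3,2)$ falling out as the true exceptions). Alternatively, one can keep $t\leq s$ and finish by integrality, since the defect stays strictly above $-1$ while the quantity being bounded is an integer; either way this verification must be supplied rather than asserted. A minor mis-attribution: the exceptional branch of the fan case was caused by $m-2s=-1$ (i.e., $s=\alpha(P_m)=\frac{m+1}{2}$), not by the possibility $t=s+1$, so the correct reason the wheel avoids it is exactly the $s\leq\lfloor m/2\rfloor$ bound you state.
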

\begin{proof}
First, by Proposition~\ref{cotainfe} and parts (\textit{ii}) and (\textit{iii}) of Theorem~\ref{indepenk} it follows that $\alpha(F_2(W_{n,m}))\geq \left\lfloor m \left\lfloor m/2 \right\rfloor/2\right\rfloor + \binom{n}{2}$. Let $\ii$ be an independent set of $F_2(W_{n,m})$. If $\ii \cap \rr=\emptyset$ we are done. Suppose now that $\ii \cap \rr \neq \emptyset$. Let $S_1$, $S_2$ and $\ii_{S_1,S_2}$ be as in Definition~\ref{indeprs}(\textit{ii}). Let $r=|S_1|$ and $s=|S_2|$. The number $\alpha(\GG_2)$ is given by Proposition~\ref{propcaminomenos} because $C_m-S_2$ is a graph where each component is isomorphic to some path graph. Therefore 

\[
|\ii_{S_1,S_2}| =rs+\binom{n-r}{2} +\frac{(m-s)^2+t^2-2t}{4}.
\]

Note that this formula is the same as that for the case of $F_{n,m}$. Therefore, when $m$ is even the result follows by Case 1 in the proof of  theorem~\ref{tabanico} (because $\left\lfloor m \left\lfloor m/2 \right\rfloor/2\right\rfloor = \left\lfloor m^2/4 \right\rfloor$). Suppose now that $m$ is odd. We will continue by cases. 

\begin{itemize}
\item[Case 1.] Suppose that $m=3$. In this case $n \geq 3$, $\left\lfloor m \left\lfloor m/2 \right\rfloor/2\right\rfloor =1$, $s=1$ and $t=0$. Therefore $|\ii_{S_1,S_2}|=r+\binom{n-r}{2}+1$ and hence 

\[
\left\lfloor\frac{m}{2}\left\lfloor\frac{m}{2}\right\rfloor\right\rfloor+\binom{n}{2}-|\ii_{S_1,S_2}| = \binom{n}{2}-\binom{n-r}{2}-r\geq 0,
\]
because $n\geq 3$ and $1\leq r\leq n$.

\item[Case 2.] Suppose that m=5. In this case $\left\lfloor m \left\lfloor m/2 \right\rfloor/2\right\rfloor =5$ and $s\in\{1, 2\}$. If $s=1$, then $t=0$ and $\alpha(\GG_2)=4$. Therefore
\[
\left\lfloor\frac{m}{2}\left\lfloor\frac{m}{2}\right\rfloor\right\rfloor+\binom{n}{2}-|\ii_{S_1,S_2}| = \binom{n}{2}-\binom{n-r}{2}+1-r\geq 0, 
\]
because  $1\leq r\leq n$. If $s=2$, then $t=1$ and $\alpha(\GG_2)=2$. Therefore
\[
\left\lfloor\frac{m}{2}\left\lfloor\frac{m}{2}\right\rfloor\right\rfloor+\binom{n}{2}-|\ii_{S_1,S_2}|= \binom{n}{2}-\binom{n-r}{2}+3-2r\geq 0, 
\]
because $1\leq r\leq n$. 

\item[Case 3.] Suppose that $m \geq 7$. When $m$ is odd 

\[
\left\lfloor\frac{m}{2}\left\lfloor\frac{m}{2}\right\rfloor\right\rfloor=\begin{cases} 
      \frac{m^2-m}{4}  & m \equiv 1 \bmod 4\\
     \frac{m^2-m}{4}-\frac{1}{2}&m \equiv 3 \bmod 4.\\
  \end{cases}
  \]
So that it is enough to show that $(m^2-m)/4-1/2+\binom{n}{2}-|\ii_{S_1,S_2}|\geq0$, for every $m \geq 7$ odd. 
As $S_2$ is an independent set of $C_m$, the graph $C_m-S_2$ has $s$ components, with $1\leq s\leq \frac{m-1}{2}$. As $m$ is odd,  $C_m-S_2$ has at least one even component. Therefore $1\leq t\leq s-1$.  Let $n_1:=n-r$ and $m_1:=m-2s$. Note that
\small{
\begin{align}
\frac{m^2-m}{4}-\frac{1}{2}+\binom{n}{2}-|\ii_{S_1,S_2}|&\geq\frac{(r-s)(r-s-1)}{2}+\frac{m_1(2s-1)+4n_1 r}{4}-\frac{5}{4}. \label{ecuam=5}
\end{align}}
We continue by cases.
\begin{enumerate}
\item[Case 3.1.] Suppose that $s=\frac{m-1}{2}$. Then $m_1=1$. As $m\geq 7$, we have that $s\geq 3$. In this case $t=s-1$. Substituting these values in equation~(\ref{ecuam=5}) we obtain
\[
\left\lfloor\frac{m}{2}\left\lfloor\frac{m}{2}\right\rfloor\right\rfloor+\binom{n}{2}-|\ii_{S_1,S_2}|
\geq \frac{(r-s)(r-s-1)}{2}+\frac{(2s-1)+4n_1 r}{4}-\frac{5}{4} \geq 0,
\]
because $s \geq 3$, $r\geq 1$ and $n_1\geq 0$.
\item[Case 3.2.] Suppose that $s<\frac{m-1}{2}$. If $s=1$, then $m_1 \geq 5$ because $m \geq 7$. By equation~(\ref{ecuam=5}) we obtain
\[
\left\lfloor\frac{m}{2}\left\lfloor\frac{m}{2}\right\rfloor\right\rfloor+\binom{n}{2}-|\ii_{S_1,S_2}|
\geq \frac{(r-1)(r-2)}{2}+\frac{5+4\alpha r}{4}-\frac{5}{4} \geq 0.
\]
because $r\geq 1$ and $n_1\geq 0$. Now, if $s\geq 2$, then $m_1 \geq 3$ because $m \geq 7$. Then
\[
\left\lfloor\frac{m}{2}\left\lfloor\frac{m}{2}\right\rfloor\right\rfloor+\binom{n}{2}-|\ii_{S_1,S_2}|
\geq \frac{(r-s)(r-s-1)}{2}+\frac{9+4n_1 r}{4}-\frac{5}{4} \geq 0.
\]
because $r\geq 1$,  $s\geq 2$ and $n_1\geq 0$.
\end{enumerate}

\end{itemize}

\end{proof}

\section{Graph $E_n+K_m$}\label{sec5}

Let  $G=E_n+K_m$. If $n=1$, then $G \simeq K_{m+1}$ and $\alpha(F_2(G))=\floor{(m+1)/2}$ by Theorem~\ref{indepenk}(\textit{vi}) . For $n\geq 2$ we have the following result

\begin{theorem}\label{teokmen}
If $m\geq 1$ and $n \geq 2$ are integers, then
\[ 
\alpha(F_2(G))=\begin{cases} 
      \ceil{\frac{m+2}{2}} & n=2\\
     \floor{\frac{m}{2}}+\binom{n}{2}& n \geq 3.\\
  \end{cases}
\]
\end{theorem}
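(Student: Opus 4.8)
The plan is to mirror the proofs of Theorems~\ref{tabanico} and~\ref{trueda}. For the \emph{lower bound}, when $n\ge 3$ Proposition~\ref{cotainfe} together with parts (\textit{iii}) and (\textit{vi}) of Theorem~\ref{indepenk} gives $\alpha(F_2(G))\ge\binom{n}{2}+\lfloor m/2\rfloor$ directly. When $n=2$ this is no longer tight (for $m$ odd), so I would instead take the independent set associated, in the sense of Definition~\ref{indeprs}(\textit{i}) with $G_1=E_2$ and $G_2=K_m$, to $I_1=V(E_2)$ and $I_2=\{v\}$ for a single vertex $v\in V(K_m)$. Since $E_2-I_1$ is the null graph and $K_m-I_2\simeq K_{m-1}$, this set has cardinality $2\cdot 1+\binom{0}{2}+\alpha(F_2(K_{m-1}))=2+\lfloor (m-1)/2\rfloor=\lceil (m+2)/2\rceil$, giving the desired lower bound for $n=2$.

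For the \emph{upper bound}, let $\ii$ be an independent set of $F_2(G)$. If $\ii\cap\rr=\emptyset$, then $\ii\cap\bb_1$ and $\ii\cap\bb_2$ are independent sets of the induced subgraphs $F_2(E_n)$ and $F_2(K_m)$, so $|\ii|\le\binom{n}{2}+\lfloor m/2\rfloor$, which is at most the claimed value in both cases (for $n=2$, $1+\lfloor m/2\rfloor\le\lceil (m+2)/2\rceil$). If $\ii\cap\rr\ne\emptyset$, Lemma~\ref{conjasociado} yields independent sets $S_1\subseteq V(E_n)$ and $S_2\subseteq V(K_m)$ with $|\ii|\le|\ii_{S_1,S_2}|$. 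The key observation is that $S_2$, being a nonempty independent set of the complete graph $K_m$, consists of a single vertex; hence $\GG_2=F_2(K_m-S_2)=F_2(K_{m-1})$ and $\alpha(\GG_2)=\lfloor (m-1)/2\rfloor$ by Theorem~\ref{indepenk}(\textit{vi}). Writing $r=|S_1|\in\{1,\dots,n\}$, the whole problem reduces to verifying
\[
r+\binom{n-r}{2}+\left\lfloor\frac{m-1}{2}\right\rfloor\ \le\ \begin{cases}\lceil (m+2)/2\rceil,& n=2,\\ \lfloor m/2\rfloor+\binom{n}{2},& n\ge 3.\end{cases}
\]

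For $n=2$ this is a direct check over $r\in\{1,2\}$, with equality at $r=2$. For $n\ge 3$ it follows from the elementary inequality $r+\binom{n-r}{2}\le\binom{n}{2}$, valid for all $1\le r\le n$ when $n\ge 3$ (since $\binom{n}{2}-\binom{n-r}{2}=\binom{r}{2}+r(n-r)\ge r$ is equivalent to $r\le 2n-3$, which holds because $r\le n\le 2n-3$), combined with $\lfloor (m-1)/2\rfloor\le\lfloor m/2\rfloor$. I do not expect a genuine obstacle here, since Lemma~\ref{conjasociado} does the heavy lifting; the only points requiring attention are the separate (and slightly different) treatment of $n=2$, where the naive bound of Proposition~\ref{cotainfe} fails and one must supply the $I_1=V(E_2)$, $I_2=\{v\}$ construction, and checking the small/boundary cases — for $m=1$ one has $K_{m-1}=K_0$ contributing nothing and $E_2+K_1\simeq P_3$, and for $n=3$ the combinatorial inequality is tight at $r=3$.
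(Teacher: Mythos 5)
Your proposal is correct and follows essentially the same route as the paper: lower bound via Proposition~\ref{cotainfe} for $n\geq 3$ and an explicit associated set for $n=2$, and upper bound via Lemma~\ref{conjasociado} with the key observation that $S_2$ is a singleton, reducing everything to the inequality $r+\binom{n-r}{2}+\lfloor(m-1)/2\rfloor\leq\lfloor m/2\rfloor+\binom{n}{2}$. The only cosmetic differences are that you phrase the $n=2$ lower bound through Definition~\ref{indeprs}(\textit{i}) instead of listing the set explicitly, and you merge the paper's separate $m$ even/odd checks into one inequality.
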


\begin{proof}

First, we consider the case $n=2$. Let $\ii$ be an independent set of $F_2(G)$. If $\ii\cap\mathcal{R}=\emptyset$, by parts  (\textit{iii}) and (\textit{vi}) of Theorem~\ref{indepenk} we have that
 \[
|\ii|\leq \alpha(F_2(E_2))+\alpha(F_2(K_m)=1+ \left\lfloor \frac{m}{2} \right\rfloor \leq  \left\lceil \frac{m+2}{2} \right\rceil.
\]
Suppose now that $\ii\cap\mathcal{R} \neq \emptyset$. Let $\ii_{S_1, S_2}$ be an independent set associated with $\ii$ as in Definition~\ref{indeprs}(\textit{ii}). Let $r=|S_1|$ and $s=|S_2|$. In this case $1\leq r \leq 2$ and $s=1$. Therefore 
\[
|\ii_{S_1, S_2}|=r+\binom{n-r}{2}+\alpha(F_2(K_{m-1})=r+\left\lfloor \frac{m-1}{2} \right\rfloor
\]
and hence
\[
\left\lceil\frac{m+2}{2}\right\rceil-|\ii_{S_1, S_2}|=\left\lceil\frac{m+2}{2}\right\rceil-\left\lfloor \frac{m-1}{2}\right\rfloor -r=2-r\geq 0.
\]
as desired.  Finally, if $m$ is even, let  
\[
I=\{\{v_1, v_2\},\{v_3, v_4\},\dots, \{v_{m-1}, v_m\}\} \cup \{\{u_1, u_2\}\},
\]
and if $m$ is odd let
\[
I=\{\{v_1, v_2\},\{v_3, v_4\},\dots, \{v_{m-2}, v_{m-1}\}\} \cup \{ \{u_1, v_m\},\{u_2, v_m\} \}.
\]
 In both cases $I$ is an independent set of $\alpha(F_2(G)$ of cardinality $\lceil(m+2)/2\rceil$. 

Now we work with the case $n \geq 3$. By Proposition~\ref{cotainfe} and parts  (\textit{iii}) and (\textit{vi}) of Theorem~\ref{indepenk} it is enough to show that $\alpha(F_2(G))\leq\lfloor\frac{m}{2}\rfloor+\binom{n}{2}$. Let $\ii$ be and independent set of $F_2(G)$. If $\ii\cap\mathcal{R}=\emptyset$ we are done. Suppose that $\ii\cap\mathcal{R}\not=\emptyset$. By Lemma~\ref{conjasociado} it follows that is enough to show that $|\ii_{S_1, S_2}|\leq\lfloor\frac{m}{2}\rfloor + \binom{n}{2}$, where $\ii_{S_1, S_2}$ is and independent set associated with $\ii$ as in Definition~\ref{indeprs}, where $S_1$ and $S_2$ are as in the proof of Lemma~\ref{conjasociado}, with $G'=K_m$. Let $r=|S_1|$ and $s=|S_2|$. In this case  $s=1$. Therefore 
\[
|\ii_{S_1,S_2}|=r+\binom{n-r}{2}+ \left\lfloor\frac{m-1}{2}\right\rfloor.
\] 
If $m$ is even, then
\small{
\begin{align*}
\left\lfloor\frac{m}{2}\right\rfloor+\binom{n}{2}-r-\left\lfloor\frac{m-1}{2}\right\rfloor-\binom{n-r}{2}&=\binom{n}{2}-\binom{n-r}{2}-r+1 \geq 0,
\end{align*}}
because $n \geq 3$ and $1\leq r \leq n$. 

If $m$ is odd, then \begin{align*}
\left\lfloor\frac{m}{2}\right\rfloor+\binom{n}{2}-r-\left\lfloor\frac{m-1}{2}\right\rfloor-\binom{n-r}{2}&=\binom{n}{2}-\binom{n-r}{2}-r \geq 0,
\end{align*}
because $n \geq 3$ and $1\leq r \leq n$.

\end{proof}

\section{Complete bipartite graph}\label{sec6}
In this section, we give another proof of a Theorem of de Alba et al.,~\cite{alba}  about the independence number of $F_2(K_{n,m})$. Remember that $K_{n,m}=E_n+E_m$. 
\begin{theorem}
Let $m,n\geq1$ be integers. Then $\alpha(F_2(K_{n,m}))=\max\{mn,\binom{m}{2}+\binom{n}{2}\}$.
\end{theorem}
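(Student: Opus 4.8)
The plan is to follow the strategy of Theorems~\ref{tabanico}, \ref{trueda} and \ref{teokmen}, applying Lemma~\ref{conjasociado} with $H = E_m$. For the lower bound, Proposition~\ref{cotainfe} together with Theorem~\ref{indepenk}(\textit{iii}) gives $\alpha(F_2(K_{n,m})) \ge \binom n2 + \binom m2$, and separately $\rr = \{\{u,v\}: u \in V(E_n),\, v \in V(E_m)\}$ is an independent set of $F_2(K_{n,m})$ of size $mn$ (the symmetric difference of two distinct elements of $\rr$ is either a $2$-subset of one part, hence not an edge of $K_{n,m}$, or has four vertices). Thus $\alpha(F_2(K_{n,m})) \ge \max\{mn, \binom m2 + \binom n2\}$.

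For the upper bound, let $\ii$ be an independent set of $F_2(K_{n,m})$. If $\ii \cap \rr = \emptyset$ then $\ii \subseteq \bb$, so $|\ii| \le \alpha(F_2(E_n)) + \alpha(F_2(E_m)) = \binom n2 + \binom m2$. If $\ii \cap \rr \neq \emptyset$, Lemma~\ref{conjasociado} furnishes independent sets $S_1, S_2$ of $E_n$ and $E_m$ with $|\ii| \le |\ii_{S_1,S_2}|$; since $E_m - S_2 = E_{m-|S_2|}$, Theorem~\ref{indepenk}(\textit{iii}) gives $\alpha(\GG_2) = \binom{m-|S_2|}{2}$, so with $r = |S_1| \in \{1,\dots,n\}$ and $s = |S_2| \in \{1,\dots,m\}$ the identity $|\ii_{S_1,S_2}| = |S_1|\,|S_2| + \binom{n-|S_1|}{2} + \alpha(\GG_2)$ recorded after Lemma~\ref{conjasociado} becomes $|\ii_{S_1,S_2}| = rs + \binom{n-r}{2} + \binom{m-s}{2} =: f(r,s)$. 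Hence it suffices to show $f(r,s) \le \max\{mn, \binom m2 + \binom n2\}$ for all $1 \le r \le n$, $1 \le s \le m$.

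The key observation is that $f$ is convex: indeed $f(r,s) = \tfrac12(r+s)^2 + (\text{an affine function of } r,s) + \binom n2 + \binom m2$, so for fixed $s$ the map $r \mapsto f(r,s)$ is convex on $[1,n]$, giving $f(r,s) \le \max\{f(1,s),f(n,s)\}$, and applying convexity in $s$ to each of $f(1,s)$ and $f(n,s)$ yields $f(r,s) \le \max\{f(1,1),f(1,m),f(n,1),f(n,m)\}$. It then remains to estimate $f(n,m) = mn$, $f(1,1) = 1 + \binom{n-1}{2} + \binom{m-1}{2}$, $f(1,m) = m + \binom{n-1}{2}$ and $f(n,1) = n + \binom{m-1}{2}$. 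Here $f(n,m) = mn$; $\binom n2 + \binom m2 - f(1,1) = (n-1)+(m-1)-1 = n+m-3 \ge 0$, except for $n=m=1$, where $K_{1,1}=K_2$ and $f(1,1)=1=\alpha(F_2(K_2))$; and for $f(1,m)$, when $m \ge 3$ we have $m \le \binom m2$, so $f(1,m) \le \binom m2 + \binom{n-1}{2} \le \binom m2 + \binom n2$, while the cases $m \in \{1,2\}$ are dispatched by direct inspection, and $f(n,1)$ is symmetric. Combining with the case $\ii \cap \rr = \emptyset$ yields $\alpha(F_2(K_{n,m})) \le \max\{mn, \binom m2 + \binom n2\}$, and with the lower bound the theorem follows.

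The main obstacle is recognizing the convexity of $f$ — i.e.\ the rewriting $f(r,s) - \binom n2 - \binom m2 = \tfrac12(r+s)^2 + (\text{affine})$ — which reduces a two-variable optimization to four elementary corner checks. After that, the only care needed is in the corner estimates for $f(1,m)$ and $f(n,1)$, whose clean bound rests on $m \le \binom m2$ (resp.\ $n \le \binom n2$) and therefore has to be supplemented by hand for the finitely many small parameter values $m \le 2$ (resp.\ $n \le 2$).
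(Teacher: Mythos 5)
Your proposal is correct and follows essentially the same route as the paper: the same lower bound via $\bb$ and $\rr$, and the same reduction via Lemma~\ref{conjasociado} to bounding $f(r,s)=rs+\binom{n-r}{2}+\binom{m-s}{2}$ over the box $1\le r\le n$, $1\le s\le m$. Your convexity observation ($f$ equals $\tfrac12(r+s)^2$ plus an affine function, so the maximum is attained at a corner) is a clean and fully worked-out version of the final inequality, which the paper merely states and leaves as an exercise.
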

\begin{proof}
Note that in this case $\mathcal{B}$ and $\mathcal{R}$ are disjoint independent sets of $F_2(K_{n,m})$, with $|\mathcal{B}|=\binom{m}{2}+\binom{n}{2}$ and $|\mathcal{R}|=mn$. Therefore $\beta(F_2(K_{n,m}))\geq\max\{mn,\binom{m}{2}+\binom{n}{2}\}$.

Let $\ii$ be an independent set of $F_2(K_{n,m})$. If $\ii\cap\mathcal{R}=\emptyset$ we are done because $|\ii|\leq\binom{m}{2}+\binom{n}{2}\leq\max\{mn, \binom{m}{2}+\binom{n}{2}\}$. Now, suppose that $\ii\cap\mathcal{R}\not=\emptyset$. By Lemma~\ref{conjasociado} it is enough to show that $|\ii_{S_1,S_2}|\leq\max\{mn,\binom{m}{2}+\binom{n}{2}\}$, where $\ii_{S_1,S_2}$ is as in Definition~\ref{indeprs}(\textit{ii}). Let $r=|S_1|$ and $s=|S_2|$. Therefore
\[
|\ii_{S_1,S_2}|=rs+\binom{n-r}{2}+\binom{m-s}{2}.
\]
If $n=r$ and $m=s$, then $|\ii_{S_1,S_2}|=mn$ and hence $\max\{mn,\binom{m}{2}+\binom{n}{2}\}-|\ii_{S_1,S_2}|\geq0$. Finally, it is an exercise to show that if $n \neq r$ or $m \neq s$, then 
\small{
\begin{align*}
\binom{m}{2}+\binom{n}{2}-|\ii_{S_1,S_2}|&=\binom{m}{2}-\binom{m-s}{2}+\binom{n}{2}-\binom{n-r}{2}-rs \geq 0.
\end{align*}}
because $1\leq r < n$ and $1 \leq s <m$. 
\end{proof}







\bibliography{independence-arxiv}

\end{document}